\theoremstyle{plain}
\newtheorem{thm}{\protect\theoremname}
\theoremstyle{plain}
\newtheorem{lem}[thm]{\protect\lemmaname}
\theoremstyle{plain}
\newtheorem{cor}[thm]{\protect\corollaryname}
\theoremstyle{remark}
\newtheorem{claim}[thm]{\protect\claimname}
\providecommand{\claimname}{Claim}
\providecommand{\corollaryname}{Corollary}
\providecommand{\lemmaname}{Lemma}
\providecommand{\theoremname}{Theorem}
\begin{document}
\global\long\def\sharpETH{\mathsf{\#ETH}}%
\global\long\def\sharpWone{\mathsf{\#W[1]}}%
\global\long\def\sharpP{\mathsf{\#P}}%
\global\long\def\PerfMatch{\#\mathrm{PerfMatch}}%
\global\long\def\Sig{\mathrm{Sig}}%
\global\long\def\hadw{\mathrm{hadw}}%
\global\long\def\Ext{\mathrm{Ext}}%
\global\long\def\sgn{\mathrm{sgn}}%

\title{Determinants from homomorphisms}
\author{Radu Curticapean\thanks{IT University of Copenhagen, BARC Copenhagen. Supported by Villum Foundation grant 16582. racu@itu.dk}}
\maketitle
\begin{abstract}
We give a new combinatorial explanation for well-known relations between
determinants and traces of matrix powers. Such relations can be used
to obtain polynomial-time and poly-logarithmic space algorithms for
the determinant. Our new explanation avoids linear-algebraic arguments
and instead exploits a classical connection between subgraph and homomorphism
counts.
\end{abstract}

\section{Introduction}

The $n\times n$ determinant is (up to scaling) the unique function
from $n\times n$ matrices to scalars that is multilinear and alternating
in the columns. It admits the \emph{Leibniz formula}
\begin{equation}
\det(A)=\sum_{\pi\in S_{n}}\sgn(\pi)\prod_{i=1}^{n}a_{i,\pi(i)},\label{eq: leibniz}
\end{equation}
where $S_{n}$ is the set of permutations of $\{1,\ldots,n\}$ and
$\sgn$ is the permutation sign. Writing $\sigma(\pi)$ for the number
of cycles in $\pi$, the permutation sign can be expressed as $\sgn(\pi)=(-1)^{n+\sigma(\pi)}$. 

When presented with only the right-hand side of (\ref{eq: leibniz}),
unaware of the connection to the determinant, one would likely struggle
to evaluate this sum of $n!$ terms in polynomial time. For comparison,
it is $\sharpP$-hard to compute the similarly defined \emph{permanent}~\cite{Valiant1979a},
which is obtained by omitting the sign factors from (\ref{eq: leibniz}).

Yet, determinants can be evaluated efficiently, e.g., via Gaussian
elimination in $O(n^{3})$ field operations, including divisions.
Asymptotically optimal algorithms achieve $O(n^{\omega})$ operations,
where $\omega<3$ is the exponent of matrix multiplication~\cite[Exercise~28.2-3]{DBLP:books/daglib/0023376}.
Note that (\ref{eq: leibniz}) is defined over any ring containing
the entries of $A$; it is also known that $\det(A)$ can be computed
with a polynomial number of ring operations, i.e., excluding divisions~\cite{DBLP:conf/soda/MahajanV97,DBLP:journals/ipl/Berkowitz84,10.2307/2235845}.

\subsection*{Determinants from matrix powers}

It is a classical result in linear algebra that $\det(A)$ can be
computed from the matrix traces $\mathrm{tr}(A^{k})$ for $1\leq k\leq n$.
Assume that $A$ is defined over an algebraically closed field $\mathbb{F}$
of characteristic $0$, such that $A$ has eigenvalues $\lambda_{1},\ldots,\lambda_{n}\in\mathbb{F}$.
The idea is to express $\det(A)$ and $\mathrm{tr}(A^{k})$ for $1\leq k\leq n$
as particular polynomials in the eigenvalues $\lambda_{1},\ldots,\lambda_{n}$
and then relate these polynomials.
\begin{itemize}
\item The determinant can be expressed as $\det(A)=\lambda_{1}\ldots\lambda_{n}$;
this is the $n$-th elementary symmetric polynomial in the eigenvalues.
Generally, the $k$-th elementary symmetric polynomial $e_{k}(x_{1},\ldots,x_{n})$
in $n$ variables is the sum of monomials $\sum_{S}\prod_{i\in S}x_{i}$,
where $S$ ranges over all $k$-subsets $S\subseteq\{1,\ldots,n\}$.
\item The matrix trace satisfies $\mathrm{tr}(A)=\lambda_{1}+\ldots+\lambda_{n}$,
and more generally, $\mathrm{tr}(A^{k})=\lambda_{1}^{k}+\ldots+\lambda_{n}^{k}$;
this is the $k$-th power-sum polynomial in the eigenvalues. Generally,
the $k$-th power sum polynomial $p_{k}(x_{1},\ldots,x_{n})$ in $n$
variables is $x_{1}^{k}+\ldots+x_{n}^{k}$.
\end{itemize}
The Girard--Newton identities then relate the power-sum polynomials
with the elementary symmetric polynomials. That is,
\[
ke_{k}(x_{1},\ldots,x_{n})=\sum_{i=1}^{k}(-1)^{i-1}e_{k-i}(x_{1},\ldots,x_{n})p_{i}(x_{1},\ldots,x_{n}).
\]
A recursive application of these identities yields $\det(A)=e_{n}(\lambda_{1},\ldots,\lambda_{n})$
from the traces of matrix powers $\mathrm{tr}(A^{k})=p_{k}(\lambda_{1},\ldots,\lambda_{n})$
for $1\leq k\leq n$. Csanky's algorithm~\cite[Chapter~31]{Kozen1992}
implements this approach with an arithmetic circuit of bounded fan-in,
$O(\log^{2}n)$ depth, and polynomial size. In other words, it shows
that determinants can be computed with $O(\log^{2}n)$ operations
on a polynomial number of parallel processors.

\subsection*{Our results}

Some notation first: A partition $\lambda$ of a number $n\in\mathbb{N}$
is a multi-set of numbers summing to $n$. We write $\lambda\vdash n$
to indicate that $\lambda$ is a partition of $n$ and write $|\lambda|$
for its number of parts. For $1\leq\ell\leq n$, we write $s_{\ell}(\lambda)\in\mathbb{N}$
for the number of occurrences of $\ell$ in $\lambda$.

The main result of this paper is a novel and self-contained derivation
of the known formula
\begin{equation}
\mathrm{det}(A)=(-1)^{n}\sum_{\lambda\vdash n}(-1)^{|\lambda|}\prod_{\ell=1}^{n}\frac{\mathrm{tr}(A^{\ell})^{s_{\ell}(\lambda)}}{s_{\ell}(\lambda)!\cdot\ell^{s_{\ell}(\lambda)}}.\label{eq: det-partsum}
\end{equation}
By expanding the Girard--Newton identities, the previous subsection
essentially implies a proof of this formula. Our new proof bypasses
notions like eigenvalues, symmetric polynomials, and the Girard--Newton
identities, and instead relies on graph-theoretic ideas. The proof
is contained in Section~\ref{sec:Proof-of}.

In Section~\ref{sec:Algorithmic-applications}, we show how this
formula can be used to obtain a polynomial-time algorithm for the
determinant, which can also be parallelized.

\section{\label{sec:Proof-of}Proof of (\ref{eq: det-partsum})}

In the following, let $A=(a_{i,j})_{i,j\in[n]}$ be a matrix. We will
study the determinant of $A$ using graph-theoretic language. The
graphs in this paper are \emph{directed} and may feature \emph{self-loops},
and some graphs may feature \emph{parallel edges} between the same
vertices. We write $V(G)$ and $E(G)$ for the vertices and edges
of a graph $G$.

\subsection{Determinants are sums of cycle covers}

The matrix $A$ induces a weighted complete directed graph (with self-loops)
on vertex set $V(A)=[n]$. Abusing notation, we also write $A$ for
this weighted graph. In this view, permutations correspond to \emph{cycle
covers}, which are edge-sets $C\subseteq E(A)$ inducing vertex-disjoint
cycles that cover all vertices of $A$. More generally, a $k$-partial
cycle cover for $0\leq k\leq n$ is a vertex-disjoint collection of
cycles with $k$ edges in total. Its \emph{format} is the partition
$\lambda\vdash k$ induced by the multi-set of cycle lengths. We write
$\mathcal{C}(n,k)$ for the set of all $k$-partial cycle covers of
the complete directed graph on vertex set $[n]$ and define
\begin{equation}
\sgn(C)=(-1)^{|C|+\sigma(\pi)}\label{eq: def-partial-sign}
\end{equation}
analogously to the permutation sign. Our proofs rely on the $k$-partial
determinant

\[
\mathrm{det}_{k}(A)=\sum_{\substack{S\subseteq[n]\text{ of size }k}
}\det(A[S]),
\]
where $A[S]$ is the square sub-matrix of $A$ defined by restricting
to the rows and columns contained in $S$. From the Leibniz formula
(\ref{eq: leibniz}), it follows that
\begin{align}
\mathrm{det}_{k}(A) & =\sum_{C\in\mathcal{C}(n,k)}\mathrm{sgn}(C)\prod_{uv\in C}a_{u,v}.\label{eq: det-cc}
\end{align}
Given $\lambda\vdash k$, let $C_{\lambda}$ be the cycle cover with
one cycle of length $s$ for each part $s$ in $\lambda$. We can
regroup terms in (\ref{eq: det-cc}) to obtain
\begin{equation}
\mathrm{det}_{k}(A)=\sum_{\lambda\vdash k}\mathrm{sgn}(C_{\lambda})\underbrace{\sum_{\substack{C\in\mathcal{C}(n,k)\,\mathrm{of}\\
\mathrm{format}\ \lambda
}
}\prod_{\ uv\in C}a_{u,v}}_{=:\mathrm{sub}(C_{\lambda}\to A)}.\label{eq: cycles-by-format}
\end{equation}
Note that $\mathrm{sub}(C_{\lambda}\to A)$ counts subgraphs isomorphic
to $C_{\lambda}$ in $A$, weighted by the product of the edge-weights
in the subgraph.

\subsection{Relating subgraphs, embeddings and homomorphisms}

Let $L$ be a graph, possibly containing parallel edges. The weighted
\emph{homomorphism} and \emph{embedding} counts from $L$ into $A$
are defined as 
\begin{align}
\hom(L\to A) & =\sum_{f:V(L)\to[n]}\prod_{\substack{e\in E(L)\\
\mathrm{with}\,e=uv
}
}a_{f(u),f(v)},\label{eq: hom-def}\\
\mathrm{emb}(L\to A) & =\sum_{\substack{f:V(L)\to[n]\\
\mathrm{injective}
}
}\prod_{\substack{e\in E(L)\\
\mathrm{with}\,e=uv
}
}a_{f(u),f(v)}.
\end{align}

For example, if $C_{\ell}$ denotes the $\ell$-cycle, then $\hom(C_{\ell}\to A)=\mathrm{tr}(A^{\ell})$.
Moreover, we have 
\begin{equation}
\hom(C_{\lambda}\to A)=\prod_{\ell=1}^{n}\mathrm{tr}(A^{\ell})^{s_{\ell}(\lambda)},\label{eq: hom-cc}
\end{equation}
since homomorphisms from a disjoint union of graphs can be chosen
independently for the individual components. Recall that $s_{\ell}(\lambda)$
counts the occurrences of part $\ell$ in $\lambda$.

Given a graph $P$ without parallel edges, an \emph{automorphism}
of $P$ is an isomorphism into itself. We write $\mathrm{aut}(P)$
for the number of automorphisms of $P$. For example,
\begin{equation}
\mathrm{aut}(C_{\lambda})=\prod_{\ell=1}^{n}s_{\ell}(\lambda)!\cdot\ell^{s_{\ell}(\lambda)}.\label{eq: aut-cc}
\end{equation}

Subgraph counts from cycle covers, as defined in (\ref{eq: cycles-by-format}),
are related to embedding counts via automorphisms:
\begin{equation}
\mathrm{sub}(C_{\lambda}\to A)=\frac{\mathrm{emb}(C_{\lambda}\to A)}{\mathrm{aut}(C_{\lambda})}.\label{eq: sub-emb-aut}
\end{equation}

Embedding counts and homomorphism counts are also related: They can
be expressed as linear combinations of each other. Roughly speaking,
embedding counts from $H$ agree with homomorphism counts from $H$
up to ``lower-order terms'' involving only graphs $F$ with strictly
less vertices than $H$. The following lemma follows directly from
\cite[(5.18)]{DBLP:books/daglib/0031021}, and we include a simple
proof (also contained in \cite{DBLP:conf/stoc/CurticapeanDM17}) for
completeness.
\begin{lem}
\label{lem: emb-hom}For any graph $H$, there are coefficients $\beta_{F}\in\mathbb{Z}$
for all graphs $F$ with $|V(F)|<|V(H)|$ such that 
\[
\mathrm{emb}(H\to A)=\hom(H\to A)+\sum_{\substack{\mathrm{graphs}\,F\,\mathrm{with}\\
|V(F)|<|V(H)|
}
}\beta_{F}\hom(F\to A).
\]
\end{lem}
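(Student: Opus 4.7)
The plan is to decompose $\hom(H\to A)$ according to the fiber structure of each map $f: V(H)\to[n]$, read off a relation of the form $\hom = \sum \mathrm{emb}$ over quotient graphs, and then invert it by induction on $|V(H)|$.

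First I would partition the sum defining $\hom(H\to A)$ according to the equivalence relation $\rho_f$ on $V(H)$ defined by $u\sim v$ iff $f(u)=f(v)$. For a fixed partition $\rho$ of $V(H)$, form the quotient graph $H/\rho$ whose vertices are the blocks of $\rho$ and whose edges are the images of the edges of $H$ under the quotient map, keeping multiplicities so that self-loops and parallel edges may appear. Maps $f$ with $\rho_f=\rho$ are in bijection with injective maps $g:V(H/\rho)\to[n]$, and under this bijection the edge-weight product $\prod_{uv\in E(H)} a_{f(u),f(v)}$ matches the corresponding edge-weight product for $g$ on $H/\rho$, because parallel edges in $H/\rho$ record exactly the multiplicities created by the identification. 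Summing over all partitions $\rho$ of $V(H)$ yields
\[
\hom(H\to A) \;=\; \sum_{\rho} \mathrm{emb}(H/\rho\to A).
\]
The partition $\rho_{0}$ into singletons satisfies $H/\rho_{0}=H$, and every other partition $\rho$ satisfies $|V(H/\rho)|<|V(H)|$, so rearranging gives
\[
\mathrm{emb}(H\to A) \;=\; \hom(H\to A) \;-\; \sum_{\rho\neq \rho_{0}} \mathrm{emb}(H/\rho\to A).
\]

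Next I would induct on $|V(H)|$. The base case $|V(H)|\le 1$ is trivial since every map on a single vertex is injective, so $\mathrm{emb}=\hom$ and no lower-order terms are needed. For the inductive step, apply the inductive hypothesis to each $H/\rho$ with $\rho\neq \rho_{0}$ to obtain integer coefficients $\beta^{\rho}_{F'}$ with $|V(F')|<|V(H/\rho)|$ expressing $\mathrm{emb}(H/\rho\to A)$ as $\hom(H/\rho\to A)$ plus an integer combination of $\hom(F'\to A)$ terms. Substituting into the displayed identity and collecting contributions by graph $F$ produces an expansion of $\mathrm{emb}(H\to A)$ as $\hom(H\to A)$ plus an integer combination of $\hom(F\to A)$ with $|V(F)|<|V(H)|$, as required.

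The main obstacle is the bookkeeping for the first step: carefully matching maps $f$ with fiber partition $\rho$ to injective maps on $V(H/\rho)$ while respecting multiplicities, so that edge-weight products are preserved when $H$ has parallel edges or self-loops or when the quotient introduces them. Once this correspondence is correctly set up, integrality of the final coefficients is automatic from the inductive step, since we only add, subtract, and combine integer multiples of $\hom$ terms.
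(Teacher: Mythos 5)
Your proposal is correct and follows essentially the same route as the paper: the identity $\hom(H\to A)=\sum_{\rho}\mathrm{emb}(H/\rho\to A)$ over partitions of $V(H)$, rearranged and then unwound by induction (the paper phrases this as iterated substitution rather than formal induction, but the argument is the same).
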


\begin{proof}
Given a partition $\rho$ of the set $V(H)$, the \emph{quotient}
$H/\rho$ is the multigraph obtained by identifying the vertices within
each block of $\rho$ while keeping all possibly emerging self-loops
and multi-edges. We have 
\begin{equation}
\hom(H\to A)=\sum_{\substack{\mathrm{partition}\,\rho\\
\mathrm{of}\,V(H)
}
}\mathrm{emb}(H/\rho\to A)\label{eq: hom-emb-1}
\end{equation}
since any function $f:V(H)\to[n]$ can be viewed equivalently as an
injective function $f:V(H/\rho)\to[n]$ for the partition $\rho=\{f^{-1}(i)\mid i\in[n]\}$.

Write $\bot$ for the finest partition of the set $V(H)$, that is,
the partition consisting of $|V(H)|$ singleton parts. By rearranging
(\ref{eq: hom-emb-1}) and using $H/\bot=H$, we obtain 
\begin{equation}
\mathrm{emb}(H\to A)=\hom(H\to A)-\sum_{\substack{\mathrm{partition}\,\rho\neq\bot\\
\mathrm{of}\,V(H)
}
}\mathrm{emb}(H/\rho\to A).\label{eq: emb-hom-1}
\end{equation}
Note that all graphs $H/\rho$ with $\rho\neq\bot$ have strictly
less vertices than $H$. We can therefore apply (\ref{eq: emb-hom-1})
again to express each term $\mathrm{emb}(H/\rho\to A)$ on the right-hand
side as $\hom(H/\rho\to A)$ minus embedding counts from smaller graphs.
This process can be iterated until reaching single-vertex graphs,
from which homomorphism and embedding counts coincide trivially. Upon
termination, all occurrences of embedding counts are replaced by homomorphism
counts.
\end{proof}
Combining (\ref{eq: cycles-by-format}), \ref{eq: sub-emb-aut}, and
Lemma~\ref{lem: emb-hom}, it follows that the $k$-partial determinant
is a linear combination of homomorphism counts from $k$-partial cycle
covers plus ``lower-order terms''.
\begin{cor}
\label{cor: det-hom}For all $0\leq k\leq n$, there are coefficients
$\alpha_{F}\in\mathbb{Q}$ for all graphs $F$ with $|V(F)|<k$ such
that
\begin{equation}
\mathrm{det}_{k}(A)=\left(\sum_{\lambda\vdash k}\frac{\mathrm{sgn}(C_{\lambda})}{\mathrm{aut}(C_{\lambda})}\hom(C_{\lambda}\to A)\right)+\sum_{\substack{\mathrm{graphs}\,F\,\mathrm{with}\\
|V(F)|<k
}
}\text{\ensuremath{\alpha}}_{F}\hom(F\to A).\label{eq: det-hom}
\end{equation}
\end{cor}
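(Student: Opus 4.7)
The plan is to assemble the identity by chaining the three previously established ingredients in order. First, I would start from equation~(\ref{eq: cycles-by-format}), which already writes $\mathrm{det}_k(A)$ as a signed sum over partitions $\lambda\vdash k$ of the weighted subgraph counts $\mathrm{sub}(C_\lambda\to A)$. Next, I would apply equation~(\ref{eq: sub-emb-aut}) to each such term, rewriting it as $\mathrm{emb}(C_\lambda\to A)/\mathrm{aut}(C_\lambda)$. This converts the partial determinant into a $\mathbb{Q}$-linear combination of embedding counts from the cycle covers $C_\lambda$, which all have exactly $k$ vertices.

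The third and final step is to apply Lemma~\ref{lem: emb-hom} separately to each of these embedding counts, with $H=C_\lambda$. Since $|V(C_\lambda)|=k$, the lemma expands $\mathrm{emb}(C_\lambda\to A)$ as $\hom(C_\lambda\to A)$ plus an integer combination of homomorphism counts $\hom(F\to A)$ over graphs $F$ with $|V(F)|<k$. Substituting this expansion back and distributing the prefactor $\mathrm{sgn}(C_\lambda)/\mathrm{aut}(C_\lambda)$ produces the leading sum $\sum_{\lambda\vdash k}\frac{\mathrm{sgn}(C_\lambda)}{\mathrm{aut}(C_\lambda)}\hom(C_\lambda\to A)$ appearing in the corollary. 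Collecting the remaining $\hom(F\to A)$ contributions over all $\lambda$ then yields, for each graph $F$ with $|V(F)|<k$, a single coefficient $\alpha_F\in\mathbb{Q}$; explicitly, $\alpha_F$ is a $\mathbb{Q}$-linear combination (with rational weights $\mathrm{sgn}(C_\lambda)/\mathrm{aut}(C_\lambda)$) of the integer coefficients supplied by Lemma~\ref{lem: emb-hom}, where $F$ is interpreted as contributing coefficient $0$ when it does not appear in the expansion for a particular $\lambda$.

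I do not anticipate a genuine obstacle: the argument is a mechanical substitution of three identities that have already been proved, and no new combinatorial content needs to be introduced. The only detail worth flagging is that, although Lemma~\ref{lem: emb-hom} produces integer coefficients, the combined coefficient $\alpha_F$ is only guaranteed to be rational because of the $1/\mathrm{aut}(C_\lambda)$ factors; this is exactly why the corollary states $\alpha_F\in\mathbb{Q}$ rather than $\alpha_F\in\mathbb{Z}$.
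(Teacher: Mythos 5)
Your proof is correct and follows exactly the route the paper intends: the corollary is stated there as an immediate consequence of chaining (\ref{eq: cycles-by-format}), (\ref{eq: sub-emb-aut}), and Lemma~\ref{lem: emb-hom}, which is precisely your substitution argument. Your remark about why $\alpha_F$ is only rational (due to the $1/\mathrm{aut}(C_\lambda)$ factors) is a correct and worthwhile clarification.
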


\subsection{Lower-order terms vanish}

It turns out that the ``lower-order terms'' in (\ref{eq: det-hom})
vanish. To show this, we use Kronecker products to lift (\ref{eq: det-hom})
to a polynomial identity and compare coefficients. For $t\in\mathbb{N},$
the \emph{Kronecker product} $A\otimes J_{t}$ of $A$ with the $t\times t$
all-ones matrix $J_{t}$ is an $nt\times nt$ matrix whose rows and
columns are indexed by $[n]\times[t]$, such that the entry at row
$(i,a)$ and column $(j,b)$ equals $a_{i,j}$. In other words, each
entry $a_{i,j}$ of $A$ is ``blown up'' to a $t\times t$ matrix
in $A\otimes J_{t}$, all of whose entries are $a_{i,j}$.

It turns out that both sides of (\ref{eq: det-hom}) react to this
operation ``polynomially'': Fixing $k=n$ and replacing $A$ by
$A\otimes J_{t}$, both sides become polynomials in $t$. To see this,
we consider the homomorphism counts from graphs $F$ occurring on
the right-hand side and observe that $\hom(F\to A\otimes J_{t})$
is proportional to $t^{|V(F)|}$.
\begin{claim}
\label{claim: hom-blowup}For any graph $F$, we have $\hom(F\to A\otimes J_{t})=t^{|V(F)|}\hom(F\to A)$.
\begin{proof}
Every function $f:V(F)\to[n]$ induces $t^{|V(F)|}$ functions $f':V(F)\to[n]\times[t]$
of the same edge-weight product by choosing an index $a_{v}\in[t]$
for each vertex $v\in V(F)$. Conversely, every function $g:V(F)\to[n]\times[t]$
is induced by the function that forgets the second component of the
images.
\end{proof}
\end{claim}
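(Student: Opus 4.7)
The plan is to expand both sides directly from the definition of homomorphism count in (\ref{eq: hom-def}) and use the explicit description of the Kronecker product given immediately above the claim. First I would unfold the left-hand side: it is a sum over all functions $g : V(F) \to [n] \times [t]$ of the edge-weight product $\prod_{e = uv \in E(F)} (A \otimes J_t)_{g(u), g(v)}$. Each such $g$ corresponds bijectively to a pair $(f, a)$ of a function $f : V(F) \to [n]$ and a labeling $a : V(F) \to [t]$, via $g(v) = (f(v), a(v))$, by projecting onto the two coordinates.

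The key observation is that by definition of $A \otimes J_t$, the entry at row $(i,x)$ and column $(j,y)$ equals $a_{i,j}$, with no dependence on the second coordinates $x, y$. Consequently, for a fixed edge $e = uv$ of $F$, the factor $(A \otimes J_t)_{g(u), g(v)} = a_{f(u), f(v)}$ depends only on $f$. So the edge-weight product depends only on $f$, and summing over the auxiliary labeling $a$ just multiplies by the number of such labelings. I would write
$$\hom(F \to A \otimes J_t) = \sum_{f : V(F) \to [n]} \sum_{a : V(F) \to [t]} \prod_{\substack{e \in E(F) \\ \text{with } e = uv}} a_{f(u), f(v)} = t^{|V(F)|} \hom(F \to A),$$
where the last step pulls the constant inner sum out to yield the factor $t^{|V(F)|} = |\{a : V(F) \to [t]\}|$ and recognizes the remaining sum over $f$ as the definition of $\hom(F \to A)$.

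There is no genuine obstacle here; the claim is a direct unfolding of definitions, and its entire content is the fact that the second coordinate of each vertex image is completely free in the Kronecker product. The only point worth stating explicitly is the bijective parametrization of $g : V(F) \to [n] \times [t]$ by pairs $(f,a)$ of coordinate projections.
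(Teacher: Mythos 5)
Your proposal is correct and takes essentially the same approach as the paper: the paper's proof also decomposes each function $g:V(F)\to[n]\times[t]$ into an underlying $f:V(F)\to[n]$ plus a free choice of second coordinates, and observes that the edge-weight product is unchanged. Your version merely spells out the bijection $(f,a)\leftrightarrow g$ and the interchange of sums more explicitly.
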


It follows from (\ref{eq: det-hom}) and Claim~\ref{claim: hom-blowup}
that
\begin{equation}
\mathrm{det}_{n}(A\otimes J_{t})=t^{n}\left(\sum_{\lambda\vdash n}\frac{\mathrm{sgn}(C_{\lambda})}{\mathrm{aut}(C_{\lambda})}\hom(C_{\lambda}\to A)\right)+\sum_{\substack{\mathrm{graphs}\,F\,\mathrm{with}\\
|V(F)|<n
}
}t^{|V(F)|}\alpha_{F}\hom(F\to A).\label{eq: lifted-det-hom}
\end{equation}
An elementary property of the determinant implies that $\mathrm{det}_{n}(A\otimes J_{t})$
is proportional to $t^{n}$, so the sum over lower-order graphs $F$
in (\ref{eq: lifted-det-hom}) vanishes.
\begin{claim}
\label{claim: det-blowup}We have $\mathrm{det}_{n}(A\otimes J_{t})=t^{n}\mathrm{det}_{n}(A)$.
\begin{proof}
By definition of the partial determinant, we have
\[
\mathrm{det}_{n}(A\otimes J_{t})=\sum_{\substack{S\subseteq[n]\times[t]\\
\mathrm{with}\:|S|=n
}
}\det((A\otimes J_{t})[S]).
\]
If $S$ contains two pairs that agree in the first component, then
the $n\times n$ matrix $(A\otimes J_{t})[S]$ contains two equal
rows (and two equal columns), so its $n$-determinant vanishes. We
can therefore restrict the summation to index sets of the form $S=\{(1,a_{1}),\ldots,(n,a_{n})\}$
for $a_{1},\ldots,a_{n}\in[t]$. There are $t^{n}$ sets $S$ of this
form, each with $(A\otimes J_{t})[S]=A$. The claim follows.
\end{proof}
By comparing coefficients in the polynomial identity (\ref{eq: lifted-det-hom})
and using from Claim~\ref{claim: det-blowup} that the left-hand
side only contains the monomial $t^{n}$, it follows that
\[
\mathrm{det}_{n}(A\otimes J_{t})=t^{n}\left(\sum_{\lambda\vdash n}\frac{\mathrm{sgn}(C_{\lambda})}{\mathrm{aut}(C_{\lambda})}\hom(C_{\lambda}\to A)\right).
\]
Setting $t=1$, we obtain
\begin{align}
\mathrm{det}(A)=\mathrm{det}_{n}(A) & =\sum_{\lambda\vdash n}\frac{\mathrm{sgn}(C_{\lambda})}{\mathrm{aut}(C_{\lambda})}\cdot\hom(C_{\lambda}\to A)\label{eq: det-to-hom}\\
 & =\sum_{\lambda\vdash n}(-1)^{n+|\lambda|}\prod_{\ell=1}^{n}\frac{\mathrm{tr}(A^{\ell})^{s_{\ell}(\lambda)}}{s_{\ell}(\lambda)!\cdot\ell^{s_{\ell}(\lambda)}}.
\end{align}
In the last equation, $\hom(C_{\lambda}\to A)$, $\mathrm{aut}(C_{\lambda})$,
and $\sgn(C_{\lambda})$ are expanded via (\ref{eq: hom-cc}), (\ref{eq: aut-cc}),
and (\ref{eq: def-partial-sign}), respectively. This proves (\ref{eq: det-partsum}).
Note that, up to this last equation, the only properties about determinants
used are that 
\end{claim}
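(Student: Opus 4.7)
The plan is to unfold the definition of the partial determinant and then exploit the block structure of $A\otimes J_{t}$. By definition, $\mathrm{det}_{n}(A\otimes J_{t})$ is the sum of $\det((A\otimes J_{t})[S])$ over all $n$-element subsets $S$ of the row/column index set $[n]\times[t]$. The crucial structural observation I would make is that the rows of $A\otimes J_{t}$ indexed by $(i,a)$ and $(i,a')$ are identical for any $a,a'\in[t]$, since the entry at row $(i,a)$ and column $(j,b)$ equals $a_{i,j}$ independently of $a$ and $b$; the analogous statement holds for columns sharing a first component.

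First, I would argue that any subset $S$ containing two pairs that share a first component yields a submatrix with two equal rows, so its determinant vanishes. The non-vanishing terms therefore come precisely from those $S$ whose first components are pairwise distinct. Since $|S|=n$ and the first components lie in $[n]$, they must exhaust $[n]$, so $S=\{(1,a_{1}),\ldots,(n,a_{n})\}$ for some tuple $(a_{1},\ldots,a_{n})\in[t]^{n}$. These subsets are in bijection with such tuples, giving exactly $t^{n}$ of them.

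Next, for each such $S$, the submatrix $(A\otimes J_{t})[S]$ has entry $a_{i,j}$ at the row indexed by $(i,a_{i})$ and the column indexed by $(j,a_{j})$, so it is literally the matrix $A$. Each non-vanishing term thus contributes $\det(A)=\mathrm{det}_{n}(A)$, and summing over the $t^{n}$ choices yields $\mathrm{det}_{n}(A\otimes J_{t})=t^{n}\,\mathrm{det}_{n}(A)$.

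I do not expect any serious obstacle: the argument is essentially bookkeeping once the block structure has been exploited. The only genuine risk is combinatorial---miscounting the non-vanishing subsets by confusing them with ordered tuples (overcounting by a factor of $n!$), or forgetting that the first components must exhaust all of $[n]$ rather than form an arbitrary $n$-subset. A cleaner way to phrase the whole argument is to note that the nonzero contributions to $\mathrm{det}_{n}$ come exactly from choosing, for each row of $A$, one of its $t$ copies, and independently for each column; since rows and columns must be matched consistently, this yields $t^{n}$ identical copies of $\det(A)$.
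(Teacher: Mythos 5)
Your proof is correct and follows essentially the same route as the paper: discard the subsets $S$ with repeated first components via the equal-rows argument, observe that the surviving $t^{n}$ subsets each give the submatrix $A$ exactly, and sum. No gaps.
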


\begin{enumerate}
\item $k$-partial determinants can be expressed as a linear combination
of $k$-vertex subgraph counts, and that 
\item the determinant vanishes on a matrix with two equal rows or columns. 
\end{enumerate}
Thus, the proof applies to any matrix functional satisfying these
two properties.

\section{\label{sec:Algorithmic-applications}Algorithmic applications}

Equation~(\ref{eq: det-partsum}) does not directly imply a polynomial-time
algorithm for the determinant, as the sum over partitions $\lambda\vdash n$
involves $\exp(O(\sqrt{n}))$ terms. Nevertheless, this sum can be
computed in polynomial time, e.g.~via dynamic programming or polynomial
multiplication, as shown below.
\begin{lem}
\label{lem: dp}Given all quantities $\mathrm{tr}(A^{\ell})$ for
$1\leq\ell\leq n$, we can compute $\mathrm{det}(A)$ with $O(n^{3})$
operations.
\end{lem}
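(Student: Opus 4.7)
The plan is to rewrite the partition sum in (\ref{eq: det-partsum}) as a coefficient of a univariate generating polynomial, and then evaluate that polynomial modulo $x^{n+1}$ by iterated truncated multiplication.

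First, each partition $\lambda\vdash n$ is encoded by its frequency vector $(s_1,\ldots,s_n)$ satisfying $\sum_\ell \ell\, s_\ell = n$, and $|\lambda|=\sum_\ell s_\ell$; absorbing $(-1)^{|\lambda|}$ into the individual factors yields
\[
\sum_{\lambda\vdash n}(-1)^{|\lambda|}\prod_{\ell=1}^n\frac{\mathrm{tr}(A^\ell)^{s_\ell(\lambda)}}{s_\ell(\lambda)!\,\ell^{s_\ell(\lambda)}} \;=\; [x^n]\prod_{\ell=1}^n G_\ell(x), \qquad G_\ell(x):=\sum_{s=0}^{\lfloor n/\ell\rfloor}\frac{(-\mathrm{tr}(A^\ell))^s}{s!\,\ell^s}\,x^{\ell s}.
\]
This holds because picking the $s_\ell$-indexed monomial from each $G_\ell$ contributes $x^{\sum_\ell \ell s_\ell}$, which produces $x^n$ exactly on partition frequency vectors of $n$. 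Consequently $\det(A) = (-1)^n [x^n]\prod_{\ell=1}^n G_\ell(x)$.

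Second, I would compute $F(x):=\prod_{\ell=1}^n G_\ell(x)\bmod x^{n+1}$ iteratively, with $F_0:=1$ and $F_\ell:=F_{\ell-1}\cdot G_\ell\bmod x^{n+1}$. Each $G_\ell$ has at most $\lfloor n/\ell\rfloor+1$ nonzero coefficients, which I would precompute in $O(n/\ell)$ ring operations via the one-step recursion between consecutive nonzero terms. Multiplying the degree-$\leq n$ polynomial $F_{\ell-1}$ by $G_\ell$ modulo $x^{n+1}$ then costs $O(n^2/\ell)$ operations. Summing over $\ell=1,\ldots,n$ yields $\sum_{\ell=1}^n O(n^2/\ell)=O(n^2\log n)$ operations in total, comfortably within the claimed $O(n^3)$ budget. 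Reading $[x^n]F_n$ and multiplying by $(-1)^n$ produces $\det(A)$.

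I do not expect a genuine conceptual obstacle; the argument is a routine reorganization of a constrained sum followed by truncated polynomial arithmetic. The only detail to handle carefully is that divisions by $s!\,\ell^s$ occur, which is unproblematic in characteristic zero, and the statement does not demand a division-free algorithm.
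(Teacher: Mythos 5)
Your proposal is correct and follows essentially the same route as the paper: encode the partition sum as the coefficient of $X^n$ in a product of sparse generating polynomials $p_\ell$ (your $G_\ell$) and compute that product by iterated truncated multiplication. Your sharper accounting of the $O(n/\ell)$ nonzero coefficients of each factor even improves the operation count to $O(n^2\log n)$, though this is only a refinement of the same algorithm's analysis.
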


\begin{proof}
Let $X$ be a formal indeterminate. For $1\leq\ell\leq n$, we define
the polynomial
\[
p_{\ell}(X)=\sum_{i=0}^{\lfloor n/\ell\rfloor}(-1)^{i}\frac{\mathrm{tr}(A^{\ell})^{i}}{i!\cdot\ell^{i}}X^{\ell i}.
\]
Then we compute the first $n+1$ coefficients of the product $p_{1}\cdot\ldots\cdot p_{n}$.
Up to the factor $(-1)^{n}$, the coefficient of $X^{n}$ in this
product is the right-hand side of (\ref{eq: det-partsum}): The coefficient
is a weighted count over all ways to choose a monomial $X^{i_{\ell}}$
from each polynomial $p_{\ell}$, subject to $\sum_{\ell}\ell\cdot i_{\ell}=n$,
thus encoding a partition $(1^{i_{1}},\ldots n^{i_{n}})\vdash n$.
In this count, each partition is weighted by $\prod_{\ell=1}^{n}(-1)^{i_{\ell}}\frac{\mathrm{tr}(A^{\ell})^{i_{\ell}}}{i_{\ell}!\cdot\ell^{i_{\ell}}}$.
The coefficient of $X^{n}$ in $p_{1}\cdot\ldots\cdot p_{n}$ thus
equals the right-hand side of (\ref{eq: det-partsum}).

The first $n+1$ coefficients of $p_{1}\cdot\ldots\cdot p_{n}$ can
be computed in $O(n^{3})$ time by iteratively multiplying $p_{t}$
onto $p_{1}\cdot\ldots\cdot p_{t-1}$ and truncating the intermediate
result to the first $n+1$ coefficients. Using standard polynomial
multiplication, each of the $n$ iterations takes $O(n^{2})$ operations.
\end{proof}
We can compute $\mathrm{tr}(A^{\ell})$ for all $1\leq\ell\leq n$
in $O(n^{\omega+1})$ overall time. This implies:
\begin{thm}
The determinant $\mathrm{det}(A)$ can be computed with $O(n^{\omega+1})$
operations.
\end{thm}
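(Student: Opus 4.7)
The plan is to reduce the theorem to Lemma~\ref{lem: dp} by showing that all traces $\mathrm{tr}(A^{\ell})$ for $1\leq\ell\leq n$ can be obtained within the claimed budget. Once these traces are available, the lemma produces $\det(A)$ in an additional $O(n^3)$ operations, which is absorbed into $O(n^{\omega+1})$ since $\omega\geq 2$.

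The first step is to compute the matrix powers $A, A^2, \ldots, A^n$ iteratively: starting from $A$, maintain a running matrix $B_\ell = A^\ell$ and at each step form $B_{\ell+1} = B_\ell \cdot A$ via one $n \times n$ matrix multiplication. Each such multiplication costs $O(n^\omega)$ ring operations by definition of $\omega$, and there are $n-1$ multiplications in total, giving $O(n^{\omega+1})$ operations overall. After each multiplication I read off $\mathrm{tr}(A^\ell) = \sum_i (B_\ell)_{ii}$, which costs only $O(n)$ per level and is negligible.

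Combining the two steps: the traces are computed in $O(n^{\omega+1})$ operations, and then Lemma~\ref{lem: dp} converts them into $\det(A)$ in $O(n^3)$ operations. Since $3 \leq \omega+1$, the total operation count is $O(n^{\omega+1})$, which proves the theorem.

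There is no real obstacle here; the only conceptual point worth flagging is that one does not attempt a cleverer scheme such as fast exponentiation by squaring, which would yield only $O(\log n)$ multiplications but fail to produce the intermediate traces $\mathrm{tr}(A^\ell)$ for every $\ell$. The iterative product approach is the natural match for Lemma~\ref{lem: dp}, and its cost dominates the entire algorithm.
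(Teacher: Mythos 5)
Your proposal is correct and matches the paper's (very brief) argument exactly: compute all traces $\mathrm{tr}(A^{\ell})$ via $n-1$ successive matrix multiplications at $O(n^{\omega})$ each, then invoke Lemma~\ref{lem: dp}, whose $O(n^{3})$ cost is absorbed since $\omega\geq 2$. Your remark on why repeated squaring is not the right tool here is a sensible addition, though not needed for the bound.
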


The traces can also be computed with arithmetic constant fan-in circuits
of poly-logarithmic depth: Any individual matrix product can trivially
be computed in $O(\log n)$ depth and $O(n^{3})$ size. Repeated squaring
allows us to compute all matrix powers $A^{\ell}$ and their traces
for $1\leq\ell\leq n$ in $O(\log^{2}n)$ depth and $\tilde{O}(n^{4})$
overall size. After all traces are computed, the polynomial multiplications
from the proof of Lemma~\ref{lem: dp} can be performed in $O(\log^{2}n)$
depth and $\tilde{O}(n^{3})$ size: A single polynomial multiplication
requires $O(\log n)$ depth and $\tilde{O}(n^{2})$ size. The truncated
product of the $n$ relevant polynomials can then be computed as an
$O(\log n)$-depth binary tree in which each node consists of a polynomial
multiplication followed by truncating to the first $n+1$ coefficients.
This implies:
\begin{thm}
The determinant $\mathrm{det}(A)$ can be computed with an arithmetic
bounded fan-in circuit of $O(\log^{2}n)$ depth and $\tilde{O}(n^{4})$
size.
\end{thm}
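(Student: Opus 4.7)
The plan is to implement the formula (\ref{eq: det-partsum}), or equivalently the dynamic program of Lemma~\ref{lem: dp}, as a bounded fan-in arithmetic circuit. I would split the computation into two phases: first produce all the traces $\mathrm{tr}(A^{\ell})$ for $1 \leq \ell \leq n$, and then feed them into a circuit realising the polynomial-multiplication scheme of Lemma~\ref{lem: dp}. Bounding depth and size of each phase separately and adding them up should yield the advertised $O(\log^{2} n)$ depth and $\tilde{O}(n^{4})$ size.

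For the first phase, I would compute each matrix power $A^{\ell}$ independently by repeated squaring. A single $n \times n$ matrix product can be realised as a bounded fan-in circuit of depth $O(\log n)$ and size $O(n^{3})$, since each of the $n^{2}$ output entries is the sum of $n$ products, computed by a balanced addition tree. Each power $A^{\ell}$ is then obtained by chaining $O(\log \ell) = O(\log n)$ such multiplications, so fits in depth $O(\log^{2} n)$ and size $\tilde{O}(n^{3})$. Running these constructions in parallel over all $\ell \in \{1, \ldots, n\}$ preserves the $O(\log^{2} n)$ depth and gives a total size of $\tilde{O}(n^{4})$. Extracting each trace by summing the $n$ diagonal entries adds only depth $O(\log n)$ and size $O(n)$ per trace, which is negligible.

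For the second phase, the polynomials $p_{\ell}(X)$ of Lemma~\ref{lem: dp} can be built from the traces by a constant-depth layer of multiplications with precomputed rational scalars. To evaluate the truncated product $p_{1} \cdots p_{n}$, I would arrange the factors as the leaves of a balanced binary tree of polynomial multiplications, truncating every intermediate result to degree at most $n$. A single truncated multiplication of polynomials of degree at most $n$ can be done in depth $O(\log n)$ and size $\tilde{O}(n^{2})$ by school-book multiplication (an $n \times n$ grid of products followed by addition trees). With $O(\log n)$ levels and $O(n)$ internal nodes, the tree contributes depth $O(\log^{2} n)$ and size $\tilde{O}(n^{3})$. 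The coefficient of $X^{n}$, signed by $(-1)^{n}$, equals $\mathrm{det}(A)$ by Lemma~\ref{lem: dp}.

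The only point requiring mild care is checking that the $n$ independent repeated-squaring schedules really stay within the $\tilde{O}(n^{4})$ size budget; since each individual power already fits in $\tilde{O}(n^{3})$ size, simply summing over $\ell$ gives the bound without any sharing of work. Combining the two phases then produces an arithmetic circuit of bounded fan-in, depth $O(\log^{2} n)$, and size $\tilde{O}(n^{4})$ computing $\mathrm{det}(A)$, as claimed.
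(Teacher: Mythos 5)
Your construction matches the paper's proof essentially verbatim: compute the traces by running repeated squaring in parallel over all $\ell$, giving $O(\log^{2}n)$ depth and $\tilde{O}(n^{4})$ size, then evaluate the truncated product $p_{1}\cdots p_{n}$ of Lemma~\ref{lem: dp} via a balanced binary tree of truncated polynomial multiplications in $O(\log^{2}n)$ depth and $\tilde{O}(n^{3})$ size. The only nitpick is that assembling the coefficients $\mathrm{tr}(A^{\ell})^{i}/(i!\cdot\ell^{i})$ of $p_{\ell}$ requires $O(\log n)$ rather than constant depth with bounded fan-in gates (the powers of the trace themselves need repeated squaring), but this does not affect the claimed bounds.
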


\bibliographystyle{plain}
\bibliography{simple-det}

\end{document}